\documentclass[12pt,oneside]{amsart}

\pdfoutput=1

\usepackage[section]{placeins}
\usepackage[margin=1in]{geometry}
\usepackage{xcolor}
\usepackage{graphicx}
\usepackage{amssymb}
\usepackage{latexsym}
\usepackage{enumitem}
\usepackage{multicol}
\usepackage{amsmath}
\usepackage{amsthm}
\usepackage{amscd}
\usepackage{multirow}
\usepackage{rotating}
\usepackage{float}
\usepackage{dsfont}
\usepackage{booktabs}

\usepackage[colorlinks,citecolor=red,pagebackref,hypertexnames=false]{hyperref}
\hypersetup{
  colorlinks = true,
  citecolor=red,
  linkcolor  = red
}

\setlist{itemsep=.06125in}

\restylefloat{table}
\numberwithin{equation}{section}

\theoremstyle{plain}
\newtheorem{theorem}{Theorem}[section]

\newtheorem{corollary}[theorem]{Corollary}

\theoremstyle{definition}
\newtheorem{definition}[theorem]{Definition}

\newtheorem{problem}[theorem]{Problem}

\newtheorem{assumption}[theorem]{Assumption}

\theoremstyle{remark}
\newtheorem{remark}[theorem]{Remark}

\def\FR{\operatorname{FR}}
\def\VC{\operatorname{VC}}

\date{\today}

\author{W. Burstein, A. Iosevich, and A. Sant}
\address{Department of Mathematics, University of Rochester, Rochester, NY, USA}
\email{willburst88@gmail.com}
\address{Department of Mathematics, University of Rochester, Rochester, NY, USA}
\email{iosevich@gmail.com}
\address{Department of Mathematics, University of Rochester, Rochester, NY, USA}
\email{asant2@ur.rochester.edu} 

\thanks{A. I. was supported in part by the National Science Foundation under NSF DMS - 2154232.}

\title{Arithmetic functions and learning theory}

\begin{document}

\begin{abstract}
We establish a connection between analytic number theory and computational learning theory by showing that the Möbius function belongs to a class of functions that is statistically hard to learn from random samples. Let $\mu_R$ denote the restriction of the Möbius function to the squarefree integers in $\{1,\dots,R\}$. Using a recent lower bound of Pandey and Radziwi{\l}{\l} for the $L^1$ norm of exponential sums with Möbius coefficients, we prove that
\[
\FR(\mu_R) \gg R^{-1/4-\epsilon}
\]
for every $\epsilon>0$. We then show that, for a suitable absolute constant $c_0>0$, the class of $\{-1,1\}$-valued functions on the squarefree integers with Fourier Ratio at least $c_0$ has Vapnik--Chervonenkis dimension at least $cR$. It follows that any distribution-independent learning algorithm that succeeds uniformly on the class $\mathcal{H}_R(\eta_R)$ containing $\mu_R$, where $\eta_R \to 0$, requires at least $\Omega(R)$ samples. We also discuss a conditional improvement under a strong uniform bound for additive twists of the Möbius function, and we note that the same method applies to the Liouville function.
\end{abstract}

\subjclass[2020]{Primary 42B10; Secondary 42B05, 58J51, 35P20, 68Q32}

\keywords{M\"obius function, Fourier ratio, Vapnik-Chervonenkis dimension, learning theory, exponential sums}

\maketitle

\section{Introduction}

Let $\mu(n)$ denote the classical M\"obius function. Recall that $\mu(1)=1$, $\mu(n)=0$ if $n$ is not squarefree, and $\mu(n)=(-1)^k$ if $n$ is squarefree, where $k$ is the number of prime factors of $n$. The purpose of this paper is to show that the M\"obius function is difficult to learn in the distribution-independent setting, in the sense that any algorithm that succeeds uniformly over the hypothesis class $\mathcal{H}_R(\eta_R)$ containing $\mu_R$ requires a large number of samples.

\vskip.125in

We show that any distribution-independent learning algorithm that succeeds uniformly over the class $\mathcal{H}_R(\eta_R)$ containing $\mu_R$ and achieves nontrivial accuracy must use at least $\Omega(R)$ samples. The key analytic input is a lower bound for the Fourier Ratio of $\mu_R$, which serves as a complexity parameter that, through an appropriate thresholded hypothesis class, yields large Vapnik--Chervonenkis dimension.

The main idea is to use the Fourier Ratio, defined as the ratio of the $L^1$ and $L^2$ norms of the Fourier transform (see Definition~\ref{def:FR}), as a measure of complexity. A lower bound on the Fourier Ratio bounded away from zero leads to large combinatorial complexity, in the sense that the associated hypothesis class has large Vapnik--Chervonenkis (VC) dimension and therefore requires many samples to learn.

The Fourier Ratio has recently emerged as a useful tool in a variety of settings, including signal complexity, spectral synthesis, and approximation theory \cite{Aldahleh2025, ILPY25, IMW2026, DI26}.

There is a long-standing heuristic, supported by both conjectures and partial results, that the M\"obius function behaves in many respects like a random sequence. 
The Chowla conjecture predicts that the values of $\mu(n)$ exhibit asymptotic independence \cite{Tao2015}, while Sarnak's M\"obius randomness principle \cite{Sarnak2010} asserts that $\mu(n)$ is asymptotically orthogonal to all deterministic sequences. 
Even classical estimates such as $\sum_{n \le x} \mu(n) = o(x)$, and its refinement under the Riemann Hypothesis \cite{Davenport1980}, reflect cancellation consistent with random behavior. 
These perspectives suggest that the M\"obius function should be difficult to predict from partial information.

The perspective taken in this paper is different from the traditional probabilistic or dynamical viewpoints. 
Instead of studying correlations or orthogonality properties, we measure the complexity of $\mu$ through its Fourier transform and relate this to learnability in a precise computational sense. 
This provides a new bridge between analytic number theory and statistical learning theory, suggesting that classical problems about multiplicative functions can be reinterpreted in terms of complexity and sampling.

It is useful to place our results in the broader context of bounds for exponential sums with multiplicative coefficients. A classical consequence of Littlewood-type inequalities \cite{Konyagin1981,MPS1981} is that trigonometric polynomials with coefficients of modulus one have $L^1$ norm bounded below by a logarithmic factor. In particular, for multiplicative functions taking values in $\{-1,1\}$, one expects $\|F_R\|_{L^1} \gg \log R$. Recent and forthcoming work of Klurman and Bezin (preprint, in preparation) shows that, among multiplicative functions, the condition $\|F_R\|_{L^1} \ll \log R$ essentially characterizes Dirichlet characters. In contrast, the M\"obius and Liouville functions lie in a different regime, where the $L^1$ norm grows much faster (at least $R^{1/4-\epsilon}$ unconditionally), reflecting a higher level of arithmetic complexity.

Recent work of Pandey and Radziwi{\l}{\l} \cite{PandeyRadziwillII} provides a complementary structural perspective on exponential sums with multiplicative coefficients. They show that if a bounded multiplicative function $f$ satisfies 
\[
\int_0^1 \left| \sum_{n\le N} f(n)e(n\alpha)\right| d\alpha \le \Delta,
\]
then $f$ must closely resemble a Dirichlet character on primes. In other words, small $L^1$ norm forces strong arithmetic structure. From the viewpoint of the present paper, this result may be interpreted as showing that low Fourier complexity is only possible for highly structured multiplicative functions. In contrast, the M\"obius and Liouville functions exhibit large $L^1$ norms, placing them firmly in the high-complexity regime that leads to large VC dimension and statistical hardness of learning.

\vskip.125in 

\subsection{Main results}

Our first main analytic result is an unconditional lower bound for the Fourier Ratio of the M\"obius function, which serves as the key input for the learning-theoretic result stated above.

\begin{definition} \label{def:FR} 
Let
\[
X_R = \{1,2,\dots,R\}, \qquad X_R' = \{ n \le R : \mu(n) \neq 0 \}.
\]
Let $\mu_R : X_R' \to \{-1,1\}$ denote the restriction of the M\"obius function to $X_R'$, extended to $X_R$ by setting $\mu_R(n)=0$ for $n \notin X_R'$.
Define the Fourier transform
\[
\widehat{\mu_R}(x) = \sum_{n=1}^R \mu_R(n) e^{-2\pi i n x}, \qquad x \in [0,1].
\]
The Fourier Ratio is defined as
\[
\FR(\mu_R) = \frac{\|\widehat{\mu_R}\|_{L^1([0,1])}}{\|\widehat{\mu_R}\|_{L^2([0,1])}}.
\]

Similarly, for any function $f$ in our hypothesis class, 
$$ \widehat{f}(x)=\sum_{n=1}^R e^{-2 \pi i nx} f(n)$$ since the torus is the dual group of ${\mathbb Z}$. Similarly, 
$$ FR(f)=\frac{{||\widehat{f}||}_{L^1([0,1])}}{{||\widehat{f}||}_{L^2([0,1])}}.$$
\end{definition}

\vskip.125in 

\begin{theorem}[Unconditional]\label{thm:unconditional}
For every $\epsilon > 0$ and all sufficiently large $R$,
\[
\FR(\mu_R) \gg_\epsilon R^{-1/4-\epsilon}.
\]
\end{theorem}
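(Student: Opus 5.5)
The ratio splits into a numerator and a denominator, and I will bound each separately. The denominator is exact by Parseval on $[0,1]$:
\[
\|\widehat{\mu_R}\|_{L^2([0,1])}^2=\sum_{n\le R}|\mu_R(n)|^2=|X_R'| = \frac{6}{\pi^2}R+O(\sqrt R),
\]
using the classical count of squarefree integers up to $R$. Hence $\|\widehat{\mu_R}\|_{L^2}\asymp R^{1/2}$.

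For the numerator I will use the unconditional lower bound of Pandey and Radziwi{\l}{\l} \cite{PandeyRadziwillII}, in the form quoted in the introduction:
\[
\int_0^1\Bigl|\sum_{n\le R}\mu(n)e(n\alpha)\Bigr|\,d\alpha \gg_\epsilon R^{1/4-\epsilon}.
\]
Since $\mu_R(n)=\mu(n)$ for every $n\le R$ (the values at non-squarefree $n$ are $0$ in both), the sum $\widehat{\mu_R}(x)$ is exactly this exponential sum evaluated at $\alpha=-x$. The substitution $x\mapsto -x$, combined with $1$-periodicity, preserves the $L^1([0,1])$ norm. Therefore $\|\widehat{\mu_R}\|_{L^1}\gg_\epsilon R^{1/4-\epsilon}$.

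Dividing the two estimates gives
\[
\FR(\mu_R)\gg_\epsilon \frac{R^{1/4-\epsilon}}{R^{1/2}}=R^{-1/4-\epsilon}
\]
for all sufficiently large $R$, which is the claim.

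The only step requiring care is the external input: one must check that the cited Pandey--Radziwi{\l}{\l} bound really applies to the M\"obius coefficients exactly as stated, with the normalization $e(n\alpha)$ on $[0,1]$ and the exponent $1/4-\epsilon$. Everything else reduces to Parseval and the density of squarefree integers.
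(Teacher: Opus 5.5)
Your argument is correct and is essentially the paper's proof: Parseval plus the squarefree count gives $\|\widehat{\mu_R}\|_{L^2}\asymp R^{1/2}$, the Pandey--Radziwi{\l}{\l} bound gives $\|\widehat{\mu_R}\|_{L^1}\gg_\epsilon R^{1/4-\epsilon}$, and you divide. Your explicit reconciliation of the $e^{-2\pi i nx}$ and $e^{2\pi i n\alpha}$ conventions via $x\mapsto -x$ is a detail the paper silently skips. One bibliographic correction: the paper takes the $L^1$ lower bound from Theorem 1 of \cite{PandeyRadziwill}. By contrast, \cite{PandeyRadziwillII} is the structural result cited in the introduction (small $L^1$ norm forces character-like behavior on primes), not the source of the $R^{1/4-\epsilon}$ bound.
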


The proof combines the classical asymptotic for the $L^2$ norm (which counts squarefree numbers) with a recent breakthrough of Pandey and Radziwi{\l}{\l} \cite{PandeyRadziwill} giving a lower bound for the $L^1$ norm of exponential sums with M\"obius coefficients.

We also discuss a conditional improvement. 
Let us state the following assumption, which is a strong uniform bound for additive twists of the M\"obius function.

\begin{assumption}\label{ass:strong_grh}
For every $\epsilon > 0$,
\[
\sup_{x \in [0,1]} \left| \sum_{n=1}^R \mu(n) e^{2\pi i n x} \right| \ll_\epsilon R^{1/2 + \epsilon}.
\]
\end{assumption}

\begin{remark}
Assumption \ref{ass:strong_grh} is not known, even under the Generalized Riemann Hypothesis. 
The best known uniform bound, due to Baker and Harman \cite{BakerHarman1991}, is $\ll_\epsilon R^{3/4 + \epsilon}$, and improving this exponent to $1/2$ remains a major open problem (see \cite{PandeyRadziwill} for discussion).
\end{remark}

\begin{theorem}[Conditional]\label{thm:conditional}
Under Assumption \ref{ass:strong_grh},
\[
\FR(\mu_R) \ge R^{-o(1)}.
\]
\end{theorem}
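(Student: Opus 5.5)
The plan is to bound the Fourier Ratio from below by the standard interpolation inequality between $L^1$, $L^2$ and $L^\infty$. For any trigonometric polynomial $F$ on $[0,1]$ we have
\[
\|F\|_{L^2}^2 = \int_0^1 |F|\,|F| \le \|F\|_{L^\infty}\,\|F\|_{L^1},
\]
so that
\[
\FR(F) = \frac{\|F\|_{L^1}}{\|F\|_{L^2}} \ge \frac{\|F\|_{L^2}}{\|F\|_{L^\infty}}.
\]
We apply this with $F=\widehat{\mu_R}$. This reduces the problem to a lower bound for the $L^2$ norm and an upper bound for the sup norm.

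For the $L^2$ norm we use Parseval. Since $\mu_R$ takes the values $\pm1$ on $X_R'$ and $0$ elsewhere,
\[
\|\widehat{\mu_R}\|_{L^2([0,1])}^2 = \sum_{n\le R}|\mu_R(n)|^2 = \#X_R' = \frac{6}{\pi^2}R + O(\sqrt R),
\]
which is the classical count of squarefree integers. Hence $\|\widehat{\mu_R}\|_{L^2}\gg R^{1/2}$.

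For the sup norm, note that $\mu_R(n)=\mu(n)$ for every $n\le R$, since the two agree on squarefree $n$ and both vanish otherwise. Moreover, $\sup_x|\widehat{\mu_R}(x)|$ equals $\sup_x\bigl|\sum_{n\le R}\mu(n)e^{2\pi i n x}\bigr|$, because replacing $x$ by $-x$ (which amounts to complex conjugation, as $\mu$ is real) does not change the supremum. Assumption~\ref{ass:strong_grh} therefore gives, for every $\epsilon>0$,
\[
\|\widehat{\mu_R}\|_{L^\infty}\ll_\epsilon R^{1/2+\epsilon}.
\]

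Combining the two bounds yields $\FR(\mu_R)\gg_\epsilon R^{-\epsilon}$ for every $\epsilon>0$. To put this in the form $R^{-o(1)}$, fix $\epsilon$ and absorb the implied constant $C_\epsilon$ by passing to $R^{-2\epsilon}$ for all $R\ge R_0(\epsilon)$. A diagonal choice of $\epsilon=\epsilon(R)\to0$ then gives $\FR(\mu_R)\ge R^{-o(1)}$.

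There is no real obstacle in this argument. The only points needing care are the sign convention in the exponential, handled by conjugation as above, and the bookkeeping of constants that converts the family of bounds $\gg_\epsilon R^{-\epsilon}$ into the $o(1)$ statement. As a sanity check, the same interpolation combined with the unconditional Baker--Harman bound $R^{3/4+\epsilon}$ would give only $\FR(\mu_R)\gg R^{-1/4-\epsilon}$. This matches Theorem~\ref{thm:unconditional} and confirms that the interpolation inequality is the right mechanism here.
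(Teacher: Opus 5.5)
Your argument is correct and is essentially the paper's proof: you use the inequality $\|F\|_{L^2}^2\le\|F\|_{L^\infty}\|F\|_{L^1}$, Parseval with the squarefree count, and Assumption~\ref{ass:strong_grh}, and your handling of the sign convention and of the passage from $\gg_\epsilon R^{-\epsilon}$ to $R^{-o(1)}$ is somewhat more careful than the paper's. One correction to your closing sanity check: the Baker--Harman bound $R^{3/4+\epsilon}$ is proved under GRH, not unconditionally, so that interpolation recovers the exponent of Theorem~\ref{thm:unconditional} only conditionally, which is exactly why the paper relies on the Pandey--Radziwi{\l}{\l} $L^1$ bound for the unconditional result.
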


Theorems \ref{thm:unconditional} and \ref{thm:conditional} show that $\mu_R$ belongs to a hypothesis class $\mathcal{H}_R(\eta_R)$ with $\eta_R \to 0$, and this is sufficient to imply large VC dimension via the monotonicity of VC dimension.

\begin{theorem}[VC dimension lower bound] \label{thm:vc_lower}
There exists an absolute constant $c_0 > 0$ such that for all sufficiently large $R$, the class
\[
\mathcal{H}_R(c_0) = \{ f : X_R' \to \{-1,1\} : \FR(f) \ge c_0 \}
\]
satisfies $\VC(\mathcal{H}_R(c_0)) \ge c R$ for some absolute constant $c > 0$.
\end{theorem}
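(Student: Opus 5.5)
We will exhibit an explicit shattered set of size $cR$. The natural idea is that a function with one large coordinate pinned down and the rest free has Fourier Ratio bounded below, since the Fourier Ratio of a $\{-1,1\}$-valued function on $m$ points is at least of order $\|\widehat f\|_{L^1}/\sqrt{m}$. We need a robust lower bound on $\|\widehat f\|_{L^1}$ valid for all sign patterns on a large subset.

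\textbf{Step 1 (lacunary or sparse subset).} Note first that $\|\widehat f\|_{L^2}=|X_R'|^{1/2}\asymp R^{1/2}$ for every $f\in\{-1,1\}^{X_R'}$, since $|X_R'|\sim 6R/\pi^2$. So $\FR(f)\ge c_0$ is equivalent to $\|\widehat f\|_{L^1}\ge c_0'\sqrt R$. The plan is to find a set $S\subset X_R'$, $|S|\ge cR$, and a fixed sign pattern $g$ on $X_R'\setminus S$ such that for every choice of signs $\sigma\in\{-1,1\}^S$ the function $f_\sigma=g\cdot 1_{X_R'\setminus S}+\sigma 1_S$ satisfies $\|\widehat{f_\sigma}\|_{L^1}\gg\sqrt R$. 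Then $S$ is shattered by $\mathcal H_R(c_0)$, giving the theorem.

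\textbf{Step 2 (averaging / Khintchine-type bound).} The cleanest route I would try: use the duality bound $\|\widehat f\|_{L^1}\ge |\int \widehat f\,\overline{\phi}|/\|\phi\|_\infty$ for a suitable trigonometric polynomial $\phi$. Alternatively use Hölder: $\|\widehat f\|_2^2\le \|\widehat f\|_1^{2/3}\|\widehat f\|_4^{4/3}$, so $\|\widehat f\|_1\ge \|\widehat f\|_2^3/\|\widehat f\|_4^2$. Thus it suffices to guarantee $\|\widehat{f_\sigma}\|_{L^4}^4\ll R^2$ uniformly in $\sigma$. Now $\|\widehat f\|_4^4=\sum_{a+b=c+d}f(a)f(b)\overline{f(c)f(d)}$, which for arbitrary signs can be as large as $R^3$ (e.g. all signs equal). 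To control it uniformly we choose $S$ to be a Sidon-like set: if $S$ were a Sidon set the additive energy of $S$ is $\ll |S|^2$, but Sidon sets in $[1,R]$ have size only $\sqrt R$, which is too small. So instead I would fix the pattern on the whole of $X_R'$ first to be a function $g$ with small $L^4$ norm (a Rudin--Shapiro-type or random sign pattern, with $\|\widehat g\|_4^4\ll R^2$, e.g. by choosing $g$ randomly: $\mathbb E\|\widehat g\|_4^4\le 3R^2$), and take $S$ to be a set on which flipping signs does not destroy this.

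\textbf{Step 3 (main obstacle: uniformity over all $2^{|S|}$ patterns).} Flipping arbitrary signs on a set of size $cR$ changes $\widehat f$ by $2\widehat{h}$ with $h$ supported on $S$, $\|\widehat h\|_2\le\sqrt{cR}$. So $\|\widehat{f_\sigma}\|_1\ge\|\widehat g\|_1-2\|\widehat h\|_1\ge \|\widehat g\|_1-2\sqrt{cR}$. Since $\|\widehat g\|_1\ge \|\widehat g\|_2^3/\|\widehat g\|_4^2\ge c_1\sqrt R$ for the random $g$ of Step 2 (with $c_1$ absolute, e.g. $c_1\approx (6/\pi^2)^{3/2}/\sqrt3$), choosing $c<c_1^2/16$ and any $S\subset X_R'$ of size $cR$ gives $\|\widehat{f_\sigma}\|_1\ge \tfrac{c_1}{2}\sqrt R$ for all $\sigma$, hence $\FR(f_\sigma)\ge c_0:=c_1/4$, say. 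This perturbation argument is what I expect makes the uniformity painless; the real work is the existence of $g$ with $\|\widehat g\|_4^4\le 3|X_R'|^2$, which follows from the expectation computation over independent random signs (only the diagonal terms $\{a,b\}=\{c,d\}$ survive) and picking a realization below the mean. Finally $|S|=cR$ is shattered, so $\VC(\mathcal H_R(c_0))\ge cR$.
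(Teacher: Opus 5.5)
Your proof is correct, and it takes a genuinely different route from the paper's.

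The paper fixes $S$ with $|S|=\lfloor \alpha R\rfloor$. For each pattern $\sigma$ separately, it fills in $X_R'\setminus S$ with fresh random signs. It then argues pointwise in $x$: Jensen gives $\mathbb{E}|A(x)+B(x)|\ge|A(x)|$, and together with the triangle inequality and Khintchine's inequality for the random part $B(x)$ this yields $\mathbb{E}|\widehat f(x)|\ge\tfrac12\mathbb{E}|B(x)|\gg\sqrt R$. Finally it picks a good realization.

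You instead fix a single background pattern $g$ on all of $X_R'$, and your argument runs in three steps:
\begin{enumerate}
\item You certify $\|\widehat g\|_{L^1}\gg\sqrt R$ through the interpolation bound $\|\widehat g\|_1\ge\|\widehat g\|_2^3/\|\widehat g\|_4^2$.
\item You control $\|\widehat g\|_4$ by an exact fourth-moment count. Only quadruples with $\{a,b\}=\{c,d\}$ survive, so $\mathbb{E}\|\widehat g\|_4^4=2N^2-N$ with $N=|X_R'|$; your constant $3$ is harmlessly generous.
\item You handle all $2^{|S|}$ patterns at once via the deterministic stability bound $\bigl|\|\widehat{f_\sigma}\|_1-\|\widehat g\|_1\bigr|\le\|\widehat{f_\sigma-g}\|_2\le 2\sqrt{|S|}$.
\end{enumerate}

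What your route buys:
\begin{enumerate}
\item The completion is independent of $\sigma$. In fact the whole Hamming ball of radius $cR$ around $g$ lies in $\mathcal H_R(c_0)$, so every subset of $X_R'$ of size $\lfloor cR\rfloor$ is shattered.
\item The Khintchine inequality with complex coefficients is replaced by an elementary additive-energy computation.
\end{enumerate}

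What the paper's route buys is size. Because the completion is re-randomized for each $\sigma$, the fixed part $A$ is never required to be small. The same argument therefore shatters any $S$ with $|X_R'\setminus S|\ge\delta R$, i.e.\ sets of density arbitrarily close to $6/\pi^2$, at the cost of $c_0\asymp\sqrt\delta$. Your perturbation step, by contrast, only tolerates $|S|\le c_1^2R/16$.

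For the theorem as stated, which asks only for some $c>0$, either argument suffices. The exploratory detours in your Step 2 (duality, Sidon sets, choosing $S$ specially) can simply be dropped, since Step 3 works for an arbitrary $S$.
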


Combining these results with the classical VC lower bound from learning theory yields the following sample complexity lower bound.

\begin{theorem}[Main result-unconditional]\label{thm:main}
Let $\mu_R$ be as above. 
Any distribution-independent learning algorithm that succeeds uniformly over the class $\mathcal{H}_R(\eta_R)$ and receives $m$ random samples $(n_i, h^*(n_i))$ with $n_i$ drawn from any distribution on $X_R'$, and outputs a (possibly random) hypothesis $\hat{h}$ satisfying
\[
\sup_{h^* \in \mathcal{H}_R(\eta_R)} \mathbb{P}\left( \operatorname{error}(\hat{h}) \ge \frac{1}{4} \right) \le \frac{1}{3}
\]
must have
\[
m \ge c R
\]
for some absolute constant $c > 0$.
\end{theorem}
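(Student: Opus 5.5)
The argument reduces Theorem~\ref{thm:main} to the classical VC lower bound for distribution-free PAC learning, combined with Theorem~\ref{thm:vc_lower}. The plan has three steps.

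\textbf{Step 1: monotonicity.} Since $\eta_R \to 0$, for all sufficiently large $R$ we have $\eta_R \le c_0$, where $c_0$ is the constant of Theorem~\ref{thm:vc_lower}. Then $\mathcal{H}_R(c_0) \subseteq \mathcal{H}_R(\eta_R)$, because $\FR(f)\ge c_0$ implies $\FR(f)\ge \eta_R$. VC dimension is monotone under inclusion, since any set shattered by the smaller class is shattered by the larger one. Hence
\[
\VC(\mathcal{H}_R(\eta_R)) \ge \VC(\mathcal{H}_R(c_0)) \ge cR .
\]
Theorems~\ref{thm:unconditional} and~\ref{thm:conditional} only ensure that $\mu_R$ itself lies in $\mathcal{H}_R(\eta_R)$ for a suitable choice of $\eta_R$, for instance $\eta_R = R^{-1/4-2\epsilon}$. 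This membership is what makes the statement about $\mu_R$, but the sample bound itself uses only the VC dimension.

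\textbf{Step 2: the classical lower bound.} Let $d=\VC(\mathcal{H}_R(\eta_R))\ge cR$, and let $S=\{x_1,\dots,x_d\}\subseteq X_R'$ be a shattered set. I would reproduce the standard argument (Ehrenfeucht--Haussler--Kearns--Valiant; see also Shalev-Shwartz--Ben-David) with the specific constants $\varepsilon = 1/4$ and $\delta = 1/3$.
\begin{itemize}
\item Take the sampling distribution to be uniform on $S$.
\item For each labeling $\sigma\in\{-1,1\}^S$, shattering provides some $h_\sigma\in\mathcal{H}_R(\eta_R)$ realizing $\sigma$ on $S$.
\item Draw $\sigma$ uniformly at random. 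With $m$ samples, at most $m$ points of $S$ are observed, so if $m \le d/2$ at least $d/2$ points are unseen.
\item On each unseen point, the posterior distribution of the label is a fair coin, independent of $\hat h$. So the expected error of $\hat h$ is at least $\tfrac12\cdot\tfrac{d-m}{d}\ge \tfrac14$.
\end{itemize}

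\textbf{Step 3: from expected error to a probability bound (the main obstacle).} The requirement $\mathbb{P}(\operatorname{error}\ge 1/4)\le 1/3$ concerns a probability, not an expectation, and the threshold $1/4$ is exactly borderline in Step 2. So the constants need some care. I would first take $m\le d/8$, which makes the number of unseen points at least $7d/8$. The error restricted to the unseen points is then an average of at least $7d/8$ independent fair Bernoulli indicators, each weighted $1/d$. Its mean is at least $7/16 > 1/4$, and by Hoeffding's inequality (or Chebyshev) it exceeds $1/4$ with probability greater than $1/3$ once $d$ is large. Averaging over $\sigma$ then shows that some fixed $h^*=h_\sigma$ violates the success criterion. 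Therefore any algorithm meeting the guarantee must have $m > d/8 \ge (c/8)R$, and the small values of $R$ are absorbed into the constant.
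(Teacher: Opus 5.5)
Your proposal is correct and follows the paper's proof. Both use monotonicity, $\mathcal{H}_R(c_0)\subseteq\mathcal{H}_R(\eta_R)$ once $\eta_R\le c_0$, together with $\VC(\mathcal{H}_R(c_0))\ge cR$ and the classical Vapnik--Chervonenkis/Ehrenfeucht et al.\ lower bound. The only difference is that the paper cites that lower bound (as $m\ge d/32$), whereas you re-derive it from the uniform distribution on a shattered set plus Hoeffding, getting $m>d/8$ for large $d$, and your Step~3 treatment of probability versus expectation is sound.
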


\begin{remark}
The conclusion of Theorem \ref{thm:main} is a uniform statement over the class $\mathcal{H}_R(\eta_R)$, rather than a statement about learning $\mu_R$ in isolation. This is essential for applying the Vapnik--Chervonenkis lower bound. Since $\mu_R \in \mathcal{H}_R(\eta_R)$, the theorem implies that any distribution-independent algorithm that guarantees uniform success over a class containing $\mu_R$ must use at least $\Omega(R)$ samples.
\end{remark}

\begin{remark}
There is an instructive contrast between different notions of complexity illustrated by the M\"obius function. From the viewpoint of Kolmogorov complexity, the function $\mu(n)$ is highly structured: it admits a very short description in terms of the prime factorization of $n$, and is therefore algorithmically simple. 

However, the results of this paper show that from the perspective of statistical learning, $\mu$ exhibits high complexity. More precisely, the large Fourier Ratio of $\mu_R$ implies that any distribution-independent algorithm that succeeds uniformly on a hypothesis class containing $\mu_R$ requires at least $\Omega(R)$ samples to achieve nontrivial accuracy. Thus, despite its low descriptional complexity, the M\"obius function belongs to hypothesis classes with high sampling complexity.

This highlights a fundamental distinction between algorithmic complexity and statistical complexity: a function may be easy to describe but hard to learn from data.
\end{remark}

\begin{remark}\label{rem:general_principle}
The proof of Theorem \ref{thm:unconditional} relies on two properties of the M\"obius function:
\begin{enumerate}
\item $\displaystyle \sum_{n=1}^R |\mu(n)|^2 \asymp R$ (i.e., $\mu(n)$ is non-zero on a positive proportion of integers);
\item $\displaystyle \int_0^1 \left| \sum_{n=1}^R \mu(n) e^{2\pi i n x} \right| dx \gg_\epsilon R^{1/4-\epsilon}$ (Pandey--Radziwill).
\end{enumerate}
The same properties hold for the Liouville function $\lambda(n) = (-1)^{\Omega(n)}$, where $\Omega(n)$ is the total number of prime factors counted with multiplicity. 
Indeed, Pandey and Radziwi{\l}{\l} prove their $L^1$ lower bound for $\lambda$ as well, and $\sum_{n=1}^R |\lambda(n)|^2 = R$. 
Hence the same unconditional lower bound $\FR(\lambda_R) \gg R^{-1/4-\epsilon}$ holds for the Liouville function, and consequently the same sample complexity lower bound applies to any distribution-independent algorithm that succeeds uniformly on a hypothesis class $\mathcal{H}_R(\eta_R)$ containing $\lambda_R$. Related perspectives on multiplicative functions and their unpredictability can be found in the work of Bourgain \cite{Bourgain2013} and Green \cite{Green2012}.
\end{remark}

\subsection*{Organization of the paper}

In Section \ref{sec:FR} we prove the unconditional Fourier Ratio lower bound (Theorem \ref{thm:unconditional}) and discuss the conditional improvement (Theorem \ref{thm:conditional}). 
In Section \ref{sec:VC} we prove the VC dimension lower bound (Theorem \ref{thm:vc_lower}). 
In Section \ref{sec:learning} we combine these results to obtain the sample complexity lower bound (Theorem \ref{thm:main}). 
Section \ref{sec:numerics} presents numerical experiments illustrating these results. 
Finally, Section \ref{sec:problems} discusses open problems and connections with the Riemann Hypothesis.

\section{Fourier Ratio for the M\"obius Function}
\label{sec:FR}

For $R \ge 1$, define the exponential sum
\[
F_R(x) = \sum_{n=1}^R \mu(n) e^{2\pi i n x}.
\]
We are interested in the Fourier Ratio
\[
\FR(\mu_R) = \frac{\|F_R\|_{L^1([0,1])}}{\|F_R\|_{L^2([0,1])}}.
\]

\subsection{The $L^2$ norm}

By Parseval's identity on the circle,
\[
\|F_R\|_{L^2}^2 = \sum_{n=1}^R |\mu(n)|^2 = \sum_{n=1}^R \mu(n)^2.
\]
The sum $\sum_{n=1}^R \mu(n)^2$ counts the number of squarefree integers up to $R$. 
A classical result of analytic number theory states that
\[
\sum_{n=1}^R \mu(n)^2 = \frac{6}{\pi^2} R + O(\sqrt{R}),
\]
which follows from the identity $\sum_{n=1}^\infty \mu(n)^2 n^{-s} = \zeta(s)/\zeta(2s)$ and standard Tauberian arguments \cite{Apostle1976}. 
Thus
\[
\|F_R\|_{L^2}^2 = \frac{6}{\pi^2} R + O(\sqrt{R}),
\]
and therefore
\[
\|F_R\|_{L^2} = \sqrt{\frac{6}{\pi^2}} \, R^{1/2} + O(1).
\]

\subsection{Unconditional $L^1$ lower bound}

The key arithmetic input is the following theorem of Pandey and Radziwi{\l}{\l}.

\begin{theorem}[Pandey--Radziwill \cite{PandeyRadziwill}, Theorem 1]\label{thm:pr}
For every $\epsilon > 0$,
\[
\int_0^1 \left| \sum_{n \le X} \mu(n) e^{2\pi i n \alpha} \right| d\alpha \gg_\epsilon X^{1/4 - \epsilon}.
\]
The same bound holds for the Liouville function $\lambda(n)$.
\end{theorem}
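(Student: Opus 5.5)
The plan is to prove the lower bound by \emph{duality against kernels of bounded $L^1$ norm}. Write $F(\alpha)=\sum_{n\le X}\mu(n)e(n\alpha)$. If $K$ is a trigonometric polynomial with $\|K\|_{L^1([0,1])}\le C$ and Fourier coefficients $\widehat K(n)=w(n)$, then Young's inequality gives $\|F*K\|_\infty\le C\|F\|_{L^1}$. Evaluating $F*K$ at $0$ therefore yields
\[
\|F\|_{L^1}\ \ge\ C^{-1}\Bigl|\sum_{n\le X}\mu(n)w(n)\Bigr| .
\]
For $Y\le X/2$ we take $w(n)=\phi(n/Y)$, with $\phi$ a fixed smooth bump supported in $[1/2,1]$. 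The corresponding kernel is essentially a Fejér/de la Vallée-Poussin-type kernel modulated in frequency, and a standard integration by parts (or Poisson summation) shows that $\|K\|_{L^1}\ll_\phi 1$ uniformly in $Y$. This reduces Theorem~\ref{thm:pr} to the following claim: for every large $X$ there is some $Y\in[X^{1/2},X/2]$ with
\[
S(Y):=\Bigl|\sum_n\mu(n)\phi(n/Y)\Bigr|\gg_\epsilon Y^{1/2-\epsilon}\ \ge X^{1/4-\epsilon/2}.
\]
The exponent $1/4$ comes exactly from taking $Y$ as small as $X^{1/2}$.

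To prove the claim I would use Mellin inversion:
\[
\sum_n\mu(n)\phi(n/Y)=\frac1{2\pi i}\int_{(2)}\frac{\tilde\phi(s)}{\zeta(s)}Y^s\,ds,
\]
where $\tilde\phi$ is entire and rapidly decaying in vertical strips. I would argue by contradiction. Suppose that $S(Y)\le Y^{1/2-\epsilon}$ for all $Y\in[X^{1/2},X/2]$. The idea is to test this against a weight in $\log Y$ that localizes the Mellin variable near a fixed zeta zero $\rho_0=\beta_0+i\gamma_0$ with $\beta_0\ge 1/2$; for instance take $\rho_0=1/2+i\,14.13\ldots$. Concretely, I would integrate $S(Y)Y^{-\rho_0}\,\psi(\log Y/\log X)$ against a smooth $\psi$ supported in $[1/2,1]$, which amounts to a Fourier transform in $\log Y$. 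Shifting contours, the simple pole of $1/\zeta$ at $\rho_0$ should contribute a main term of size $\asymp\log X$, with residue $\tilde\phi(\rho_0)/\zeta'(\rho_0)\ne0$, after choosing $\phi$ so that $\tilde\phi(\rho_0)\ne0$. If $\rho_0$ is a multiple zero, we take the leading Laurent term instead. Under the assumed upper bound, the same integral would be $O(X^{-\epsilon/2}\log X)$, which gives the contradiction. Applying the same argument to $\lambda$ with $1/\zeta$ replaced by $\zeta(2s)/\zeta(s)$ gives the Liouville case.

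The main obstacle is making the contour shift rigorous in this finite window in $\log Y$. We only know the sum on $[X^{1/2},X]$, not for all $Y$, so the test function in $\log Y$ has compact support. Its Mellin transform then has only finite frequency resolution of width about $1/\log X$, and other zeros $\rho$ near $\rho_0$, together with the unknown zeros off the critical line, could in principle interfere. I would try the following. First, choose $\psi$ with a rapidly decaying transform so that zeros with $|\gamma-\gamma_0|\gg1$ contribute negligibly. Second, use that near height $14$ there is only one zero, which is simple and on the line, so the localization has a genuine gap to work with. Third, bound $1/\zeta$ on a suitable horizontal and vertical contour at bounded height by classical zero-free-region estimates. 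If this localized argument fails, the fallback is an $L^2$ version: compare $\int_{X^{1/2}}^{X}S(Y)^2\,dY/Y^2$ with the corresponding Mellin–Plancherel integral of $|\tilde\phi(s)/\zeta(s)|^2$ on the line $\operatorname{Re}s=1/2-\epsilon$, where the growth forced by a zero with $\beta\ge1/2$ should survive the truncation.
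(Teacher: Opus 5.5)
Your reduction rests on a false inequality. Young's inequality on the circle gives $\|F*K\|_{L^\infty}\le\|F\|_{L^1}\|K\|_{L^\infty}$ or $\|F*K\|_{L^1}\le\|F\|_{L^1}\|K\|_{L^1}$. It never gives $\|F*K\|_{L^\infty}\le\|F\|_{L^1}\|K\|_{L^1}$.

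To see that your version fails, replace $\mu$ by $1$. Then $F$ is the Dirichlet kernel and $\|F\|_{L^1}\asymp\log X$. Take $\phi\ge 0$, $\phi\not\equiv 0$, and $Y=X/2$. Your inequality would give $X\asymp\sum_n\phi(n/Y)\ll\log X$, which is absurd.

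For your kernel $K(\alpha)=\sum_n\phi(n/Y)e(n\alpha)$ one has $\|K\|_{L^\infty}\asymp Y$. So the correct duality only yields $\|F\|_{L^1}\gg|S(Y)|/Y$, which is $O(1)$ by the trivial estimate $|S(Y)|\ll Y$. Even a complete window $\Omega$-theorem $S(Y)\gg Y^{1/2-\epsilon}$ would then give only $\|F\|_{L^1}\gg Y^{-1/2-\epsilon}$.

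This is structural, not a bookkeeping slip. $K$ has height $\asymp Y$ on an arc of length $\asymp 1/Y$ around $\alpha=0$. Pairing against it can therefore certify only the $L^1$ mass of $F$ on that arc, and $|S(Y)|\approx Y^{1/2}$ certifies about $Y^{-1/2}$ of it.

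To get a power of $X$ you must test against a function of bounded sup norm that sees $F$ near many rationals at once. For example, take $G(\alpha)=\sum_{q}\sum_{(a,q)=1}\theta_{a,q}\,\Phi\bigl(Q^2(\alpha-a/q)\bigr)$, with $q$ running over primes in $(Q,2Q]$, $|\theta_{a,q}|=1$, and $\Phi$ even with $|\Phi|\le 1$ and support in $[-1/10,1/10]$. Then the bumps are disjoint and $\|G\|_{L^\infty}\le 1$. Choosing the phases, and using Gauss sums for a primitive character $\chi_q$ mod $q$, gives
\[
\|F\|_{L^1}\ \ge\ Q^{-2}\sum_{q}\ \sum_{(a,q)=1}\Bigl|\sum_{n\le X}\mu(n)e(an/q)\widehat{\Phi}(n/Q^2)\Bigr|\ \ge\ Q^{-2}\sum_{q}\sqrt{q}\,\Bigl|\sum_{n\le X}\mu(n)\overline{\chi_q}(n)\widehat{\Phi}(n/Q^2)\Bigr| .
\]
Suppose the twisted sums on the right are $\gg Q^{1-\epsilon}$ for $\gg Q^{1-\epsilon}$ of the moduli. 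Then the choice $Q\asymp X^{1/2}$ produces $X^{1/4-\epsilon}$. This is where an exponent $1/4$ naturally arises, rather than from restricting to $Y\ge X^{1/2}$. Along this route the real input is a lower bound for Möbius twisted by characters, uniformly over many moduli comparable to the square root of the sum's length. An $\Omega$-result for $1/\zeta$ alone does not suffice. The paper carries out none of this; it simply imports the bound from Pandey--Radziwi{\l}{\l}.

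Separately, your window $\Omega$-argument is not established by the sketch either:
\begin{itemize}
\item Picking up the residue at $\rho_0$ requires control of $1/\zeta(s)$ to the left of $\operatorname{Re}s=1/2$. This is unavailable unconditionally, since zero-free regions only reach near $\operatorname{Re}s=1$.
\item Hypothetical zeros with $\beta>1/2$, at any height, enter your localized integral with a factor growing like $X^{\beta-1/2}$ relative to $\rho_0$. No gap argument near height $14$ removes them.
\item The remaining residues involve $1/\zeta'(\rho)$ and possible multiplicities, for which no adequate bounds are known.
\end{itemize}
Quantitative oscillation theorems of this kind are proved with Tur\'an-type power-sum methods, not by a local contour shift.
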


\begin{remark}
The proof in \cite{PandeyRadziwill} is presented for the Liouville function, but the authors note that the same method works for the M\"obius function, improving the earlier bound $\gg X^{1/6}$ due to Balog and Ruzsa \cite{BalogRuzsa2001}.
\end{remark}

Theorem \ref{thm:pr} directly gives the desired lower bound for $F_R$ (with $X=R$), so
\[
\|F_R\|_{L^1} \gg_\epsilon R^{1/4 - \epsilon}.
\]

\begin{proof}[Proof of Theorem \ref{thm:unconditional}]
Combining the $L^2$ estimate and the Pandey--Radziwill $L^1$ lower bound,
\[
\FR(\mu_R) = \frac{\|F_R\|_{L^1}}{\|F_R\|_{L^2}} \gg_\epsilon \frac{R^{1/4 - \epsilon}}{R^{1/2}} = R^{-1/4 - \epsilon}.
\]
This holds for every $\epsilon > 0$ and all sufficiently large $R$. 
\end{proof}

\subsection{Conditional improvement}

We now discuss what could be proved under a strong assumption on the $L^\infty$ norm of $F_R$.

\begin{assumption}\label{ass:l_infty}
Assumption~\ref{ass:strong_grh} holds.
\end{assumption}

\begin{remark}
Assumption \ref{ass:l_infty} is a strong form of the M\"obius randomness principle for additive twists. 
It is not known, even under the Generalized Riemann Hypothesis. 
The best known uniform bound, due to Baker and Harman \cite{BakerHarman1991}, is $\|F_R\|_{L^\infty} \ll_\epsilon R^{3/4 + \epsilon}$. 
Pandey and Radziwi{\l}{\l} \cite{PandeyRadziwill} note that improving this bound presents significant challenges, as any method based on Type-II sums can save at most $R^{1/4}$ over the trivial bound.
\end{remark}

\begin{theorem}\label{thm:conditional_improvement}
Under Assumption \ref{ass:l_infty},
\[
\FR(\mu_R) = R^{o(1)}.
\]
\end{theorem}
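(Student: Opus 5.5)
The plan is to prove two-sided bounds $R^{-\epsilon} \ll_\epsilon \FR(\mu_R) \ll 1$ for every $\epsilon>0$, which is exactly the statement $\FR(\mu_R)=R^{o(1)}$. Throughout we work with $F_R(x)=\sum_{n\le R}\mu(n)e^{2\pi i n x}$ as in Section~\ref{sec:FR}. The relevant norms are on $[0,1]$ with Lebesgue (probability) measure, and $\FR(\mu_R)=\|F_R\|_{L^1}/\|F_R\|_{L^2}$.

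\emph{Upper direction.} Since $[0,1]$ has total measure one, Cauchy--Schwarz gives $\|F_R\|_{L^1}\le \|F_R\|_{L^2}$, so $\FR(\mu_R)\le 1=R^{0}$. This needs neither Assumption~\ref{ass:l_infty} nor any arithmetic. The real content of Theorem~\ref{thm:conditional_improvement} is therefore the matching lower bound. It upgrades the exponent $-1/4-\epsilon$ of Theorem~\ref{thm:unconditional} to $-\epsilon$, which pins $\FR(\mu_R)$ between $R^{-\epsilon}$ and $1$.

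\emph{Lower direction.} We use the elementary interpolation inequality
\[
\|F_R\|_{L^2}^2=\int_0^1 |F_R|\cdot|F_R|\,dx\le \|F_R\|_{L^\infty}\,\|F_R\|_{L^1}.
\]
By the $L^2$ computation in Section~\ref{sec:FR} (Parseval plus the squarefree count), $\|F_R\|_{L^2}^2=\frac{6}{\pi^2}R+O(\sqrt R)\gg R$. By Assumption~\ref{ass:l_infty}, $\|F_R\|_{L^\infty}\ll_\epsilon R^{1/2+\epsilon}$. Combining these,
\[
\|F_R\|_{L^1}\ge \frac{\|F_R\|_{L^2}^2}{\|F_R\|_{L^\infty}}\gg_\epsilon R^{1/2-\epsilon}.
\]
Dividing by $\|F_R\|_{L^2}\asymp R^{1/2}$ yields $\FR(\mu_R)\gg_\epsilon R^{-\epsilon}$.

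\emph{Conclusion and main obstacle.} Together the two directions give $R^{-\epsilon}\ll_\epsilon \FR(\mu_R)\le 1$ for every $\epsilon>0$. Hence $\log \FR(\mu_R)/\log R\to 0$, i.e.\ $\FR(\mu_R)=R^{o(1)}$. The only nontrivial input is the sup-norm bound, which is entirely supplied by the assumption. Within the proof, the step that needs care is the bookkeeping of $\epsilon$: we must check that the implied constants depend only on $\epsilon$, and that the $O(\sqrt R)$ error in the squarefree count is absorbed for large $R$. This is what allows the family of bounds $R^{-\epsilon}$ to be converted into the $o(1)$ exponent.
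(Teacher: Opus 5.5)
Your proposal is correct and follows the paper's proof. The lower bound comes from the same inequality $\|F_R\|_{L^2}^2\le \|F_R\|_{L^\infty}\|F_R\|_{L^1}$, combined with the squarefree count and Assumption~\ref{ass:l_infty}, giving $\FR(\mu_R)\gg_\epsilon R^{-\epsilon}$; your explicit upper bound $\FR(\mu_R)\le 1$ (Cauchy--Schwarz on $[0,1]$) is a worthwhile addition, since the paper leaves that direction implicit even though the two-sided statement $R^{o(1)}$ needs it.
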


\begin{proof}
By Cauchy--Schwarz,
\[
\|F_R\|_{L^2}^2 = \int_0^1 |F_R(x)|^2 dx \le \|F_R\|_{L^\infty} \|F_R\|_{L^1}.
\]
Rearranging,
\[
\|F_R\|_{L^1} \ge \frac{\|F_R\|_{L^2}^2}{\|F_R\|_{L^\infty}}.
\]
Using $\|F_R\|_{L^2}^2 \sim \frac{6}{\pi^2} R$ and Assumption \ref{ass:l_infty},
\[
\|F_R\|_{L^1} \gg \frac{R}{R^{1/2+\epsilon}} = R^{1/2-\epsilon}.
\]
Hence
\[
\FR(\mu_R) = \frac{\|F_R\|_{L^1}}{\|F_R\|_{L^2}} \gg \frac{R^{1/2-\epsilon}}{R^{1/2}} = R^{-\epsilon}.
\]
Since $\epsilon > 0$ is arbitrary, $\FR(\mu_R) = R^{o(1)}$. 
\end{proof}

\begin{remark}
Theorem \ref{thm:conditional_improvement} shows that under Assumption \ref{ass:l_infty}, the Fourier Ratio decays slower than any power of $R$. 
This is a weaker statement than having a constant lower bound; it only guarantees subpolynomial decay. 
To obtain a constant lower bound, one would need a stronger assumption (e.g., $\|F_R\|_{L^\infty} \ll R^{1/2}$ with no $\epsilon$). 
We will not pursue this further here.
\end{remark}

\section{The Fourier Ratio and the Vapnik--Chervonenkis Dimension}
\label{sec:VC}

In this section we show that functions with Fourier Ratio bounded below by a positive constant form a class of high combinatorial complexity. 
We work on the discrete domain
\[
X_R = \{1,2,\dots,R\}.
\]
For a function $f: X_R \to \mathbb{C}$, define its Fourier transform by
\[
\hat{f}(x) = \sum_{n=1}^R f(n) e^{2\pi i n x}, \qquad x \in [0,1],
\]
and the Fourier Ratio by
\[
\FR(f) = \frac{\|\hat{f}\|_{L^1([0,1])}}{\|\hat{f}\|_{L^2([0,1])}}.
\]

Recall that $X_R' = \{ n \le R : \mu(n) \neq 0 \}$ is the set of squarefree numbers up to $R$, and that $|X_R'| \sim \frac{6}{\pi^2} R$. 
For any function $f: X_R' \to \{-1,1\}$, we extend $f$ to $X_R$ by setting $f(n)=0$ for $n \notin X_R'$; the Fourier transform and Fourier Ratio are then defined with respect to this extension.

For $\eta > 0$, define
\[
\mathcal{H}_R(\eta) = \left\{ f: X_R' \to \{-1,1\} : \FR(f) \ge \eta \right\}.
\]

\begin{definition}
A subset $S \subset X_R'$ is said to be \emph{shattered} by a class $\mathcal{F}$ of functions $f: X_R' \to \{-1,1\}$ if for every function $\sigma: S \to \{-1,1\}$ there exists $f \in \mathcal{F}$ such that
\[
f(n) = \sigma(n) \quad \text{for all } n \in S.
\]
The Vapnik--Chervonenkis dimension of $\mathcal{F}$, denoted $\VC(\mathcal{F})$, is the maximal cardinality of a shattered subset.
\end{definition}

\begin{theorem}\label{thm:vc_lower_proof}
There exists an absolute constant $c_0 > 0$ such that for all sufficiently large $R$, the class $\mathcal{H}_R(c_0)$ satisfies
\[
\VC(\mathcal{H}_R(c_0)) \ge c R
\]
for some absolute constant $c > 0$.
\end{theorem}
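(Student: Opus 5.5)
The strategy is to show that a uniformly random sign pattern has Fourier Ratio bounded below by an absolute constant, and then to shatter a large set by fixing a random completion off that set. Write $N = |X_R'| \sim \frac{6}{\pi^2}R$. For every $f : X_R' \to \{-1,1\}$, Parseval gives $\|\hat f\|_{L^2}^2 = N$ exactly. So $\FR(f) \ge c_0$ is equivalent to $\|\hat f\|_{L^1} \ge c_0\sqrt{N}$.

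We get this from the inequality $\|\hat f\|_{L^2}^2 \le \|\hat f\|_{L^1}^{2/3}\|\hat f\|_{L^4}^{4/3}$, which is Hölder with exponents interpolating between $L^1$ and $L^4$. It yields
\[
\|\hat f\|_{L^1} \ge \frac{\|\hat f\|_{L^2}^3}{\|\hat f\|_{L^4}^2}.
\]
Hence it suffices to have $\|\hat f\|_{L^4}^4 \le K N^2$ for an absolute $K$, which gives $\FR(f) \ge K^{-1/2}$. Now $\|\hat f\|_{L^4}^4$ counts additive quadruples:
\[
\|\hat f\|_{L^4}^4 = \sum_{a+b=c+d} f(a)f(b)f(c)f(d), \qquad a,b,c,d \in X_R'.
\]

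Fix a set $S \subset X_R'$ and a prescribed sign pattern $\sigma$ on $S$. Extend $\sigma$ by independent uniform signs $\epsilon_n$ for $n \in X_R' \setminus S$.

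The plan is to take $S$ of size $\asymp R$ and to bound the $L^4$ norm deterministically in the worst case over $\sigma$, after averaging only over the free signs. A crude bound on quadruples supported in $S$ gives $|S|^3$, which is too large. So $S$ must carry only $O(R^2)$ additive quadruples: we want $E(S) := \#\{a+b=c+d : a,b,c,d \in S\} \ll R^2$. A set of size $\asymp R$ inside $[1,R]$ has energy $\gg R^3$, so this fails. This is the main obstacle, and it forces a different bookkeeping.

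Instead, split $f = g + h$ with $g = \sigma 1_S$ and $h = \epsilon 1_{X_R' \setminus S}$. We only need $\|\hat f\|_{L^1}$ large, not a good $L^4$ bound, so we use $\|\hat f\|_{L^1} \ge \|\hat h\|_{L^1} - \|\hat g\|_{L^1}$. We choose $S$ sparse in density, $|S| = \delta N$ with $\delta$ a small absolute constant. Then Cauchy--Schwarz gives
\[
\|\hat g\|_{L^1} \le \|\hat g\|_{L^2} = \sqrt{\delta N}.
\]
For the random part, Khintchine-type averaging gives
\[
\mathbb{E}\|\hat h\|_{L^4}^4 \le 3\bigl((1-\delta)N\bigr)^2,
\]
since only the diagonal pairings survive. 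By Markov's inequality, some choice of $\epsilon$ has $\|\hat h\|_{L^4}^4 \le 6N^2$. The Hölder bound above then gives
\[
\|\hat h\|_{L^1} \ge \frac{\bigl((1-\delta)N\bigr)^{3/2}}{\sqrt{6}\,N} \ge c_1\sqrt{N}.
\]

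The key point is that the free signs $\epsilon$ can be chosen once, independently of $\sigma$, because $h$ does not involve $\sigma$. Taking $\delta$ with $\sqrt{\delta} < c_1/2$, every completion $f = \sigma 1_S + h$ satisfies
\[
\|\hat f\|_{L^1} \ge \tfrac{c_1}{2}\sqrt{N},
\]
so $\FR(f) \ge c_1/2 =: c_0$.

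Therefore $S$ is shattered by $\mathcal{H}_R(c_0)$. Any $S \subset X_R'$ with $|S| = \lfloor \delta N \rfloor$ works, and this gives
\[
\VC(\mathcal{H}_R(c_0)) \ge \delta N \ge cR,
\]
using $N \sim \frac{6}{\pi^2}R$ for $R$ sufficiently large.
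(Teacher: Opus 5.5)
Your argument is correct, but it takes a different route from the paper.

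\textbf{The paper's route.} The paper works pointwise in $x$. It writes $\hat f = A + B$, with $A$ from the prescribed signs on $S$ and $B$ from the random ones. Jensen's inequality ($\mathbb{E}B(x)=0$ gives $\mathbb{E}|A(x)+B(x)|\ge |A(x)|$) combined with the triangle inequality yields $\mathbb{E}|\hat f(x)|\ge \frac12\mathbb{E}|B(x)|$. Khintchine's inequality is then applied at each fixed $x$. Finally the paper integrates in $x$ and picks a good realization of the free signs separately for each $\sigma$.

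\textbf{Your route.} You work with global norms instead. The interpolation bound $\|\hat h\|_{L^1}\ge \|\hat h\|_{L^2}^3/\|\hat h\|_{L^4}^2$ is combined with the elementary count $\mathbb{E}\|\hat h\|_{L^4}^4 = 2M^2-M$, where $M=|X_R'\setminus S|$ and only the trivial pairings $\{a,b\}=\{c,d\}$ survive. This produces one realization with $\|\hat h\|_{L^1}\gg\sqrt N$. The prescribed part is then absorbed as a perturbation through $\|\hat g\|_{L^1}\le\sqrt{|S|}$.

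\textbf{Trade-offs.} Your route buys uniformity and elementarity. A single completion $h$ serves all $2^{|S|}$ patterns at once, so the entire cube $\{\sigma 1_S+h\}$ lies in $\mathcal H_R(c_0)$. You need only a fourth-moment count, not Khintchine with complex coefficients or the Jensen step. The price is that $S$ must have small density, because the crude bound $\sqrt{\delta N}$ has to lose against $c_1\sqrt N$. The paper's pointwise argument only uses $|X_R'\setminus S|\gg R$. So despite its phrasing, it would shatter any fixed proportion of $X_R'$ below $1$, with $c_0$ depending on that proportion.

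\textbf{A bookkeeping point.} Your $c_1=(1-\delta)^{3/2}/\sqrt6$ depends on $\delta$, and you then choose $\delta$ in terms of $c_1$. This is harmless: $c_1\ge 2^{-3/2}6^{-1/2}$ whenever $\delta\le\frac12$, so fix $\delta$ against that lower bound (e.g.\ $\delta=1/200$). The exploratory remarks about the additive energy of $S$ can simply be dropped.
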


\begin{proof}
Since $|X_R'| \sim \frac{6}{\pi^2}R$, we may choose a subset $S \subset X_R'$ with
\[
|S| = \lfloor \alpha R \rfloor
\]
for some sufficiently small absolute constant $\alpha>0$. We will show that $S$ is shattered by $\mathcal H_R(c_0)$ for a suitable absolute constant $c_0>0$.

Fix an arbitrary function $\sigma:S\to\{-1,1\}$. We define a function $f:X_R'\to\{-1,1\}$ by setting
\[
f(n)=\sigma(n), \qquad n\in S,
\]
and choosing the values of $f$ on $X_R'\setminus S$ independently and uniformly at random from $\{-1,1\}$. Extend $f$ to $X_R$ by setting $f(n)=0$ for $n\notin X_R'$.

For $x\in[0,1]$, write
\[
\hat f(x)=A(x)+B(x),
\]
where
\[
A(x)=\sum_{n\in S}\sigma(n)e^{2\pi i nx},
\qquad
B(x)=\sum_{n\in X_R'\setminus S} f(n)e^{2\pi i nx}.
\]

For each fixed $x$, the random variable $B(x)$ is a Rademacher sum with coefficients of modulus $1$, so by Khintchine's inequality,
\[
\mathbb E|B(x)| \ge \frac{1}{\sqrt2}\sqrt{|X_R'\setminus S|}.
\]

On the other hand, since $\mathbb E B(x)=0$, Jensen's inequality gives
\[
\mathbb E|\hat f(x)|=\mathbb E|A(x)+B(x)| \ge \left|\mathbb E(A(x)+B(x))\right| = |A(x)|.
\]
Also, by the triangle inequality,
\[
|A(x)+B(x)| \ge |B(x)|-|A(x)|,
\]
and hence
\[
\mathbb E|\hat f(x)| \ge \mathbb E|B(x)|-|A(x)|.
\]

Therefore
\[
\mathbb E|\hat f(x)|
\ge
\max\left\{|A(x)|,\ \mathbb E|B(x)|-|A(x)|\right\}.
\]
For any nonnegative numbers $u,v$, one has
\[
\max\{u,v-u\}\ge \frac v2.
\]
Applying this with $u=|A(x)|$ and $v=\mathbb E|B(x)|$, we obtain
\[
\mathbb E|\hat f(x)| \ge \frac12\,\mathbb E|B(x)|
\ge
\frac{1}{2\sqrt2}\sqrt{|X_R'\setminus S|}.
\]

Since $|X_R'\setminus S| \sim \left(\frac{6}{\pi^2}-\alpha\right)R$, it follows that
\[
\mathbb E|\hat f(x)| \ge c_1\sqrt R
\]
for all $x\in[0,1]$ and some absolute constant $c_1>0$.

Integrating over $x\in[0,1]$,
\[
\mathbb E\|\hat f\|_{L^1}
=
\int_0^1 \mathbb E|\hat f(x)|\,dx
\ge
c_1\sqrt R.
\]
By averaging, there exists a realization of the random signs on $X_R'\setminus S$ such that
\[
\|\hat f\|_{L^1}\ge c_1\sqrt R.
\]

By Parseval's identity,
\[
\|\hat f\|_{L^2}^2
=
\sum_{n\in X_R'} |f(n)|^2
=
|X_R'|.
\]
Thus
\[
\|\hat f\|_{L^2}\le c_2\sqrt R
\]
for some absolute constant $c_2>0$ and all sufficiently large $R$.

Consequently, for this realization,
\[
\FR(f)=\frac{\|\hat f\|_{L^1}}{\|\hat f\|_{L^2}}
\ge
\frac{c_1}{c_2}
=:c_0>0.
\]
Since $f(n)=\sigma(n)$ on $S$, this shows that every sign pattern on $S$ is realized by some function in $\mathcal H_R(c_0)$. Therefore $S$ is shattered by $\mathcal H_R(c_0)$.

Because $|S|=\lfloor \alpha R\rfloor$, we conclude that
\[
\VC(\mathcal H_R(c_0))\ge \alpha R.
\]
Taking $c=\alpha$ completes the proof.
\end{proof}
\begin{corollary}\label{cor:vc_any_eta}
For any $\eta_R \le c_0$, we have $\VC(\mathcal{H}_R(\eta_R)) \ge \VC(\mathcal{H}_R(c_0)) \ge c R$ for the same constant $c > 0$.
\end{corollary}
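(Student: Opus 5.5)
The corollary should follow from two facts. The classes $\mathcal{H}_R(\eta)$ are nested and decreasing in $\eta$. VC dimension is monotone under inclusion of classes. So the plan is to verify the inclusion and then transfer the shattered set from Theorem~\ref{thm:vc_lower_proof}.

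\textbf{Inclusion.} Fix $R$ large enough that Theorem~\ref{thm:vc_lower_proof} applies, and suppose $\eta_R \le c_0$. Take any $f \in \mathcal{H}_R(c_0)$, so $f: X_R' \to \{-1,1\}$ and $\FR(f) \ge c_0$. Then $\FR(f) \ge c_0 \ge \eta_R$, hence $f \in \mathcal{H}_R(\eta_R)$. This gives
\[
\mathcal{H}_R(c_0) \subseteq \mathcal{H}_R(\eta_R).
\]
One point needs checking: $\FR(f)$ must be well defined, i.e.\ the denominator must be nonzero. By Parseval, $\|\hat f\|_{L^2}^2 = |X_R'| > 0$ for every $\{-1,1\}$-valued $f$ on $X_R'$ with $R \ge 1$. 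So there is no degenerate case.

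\textbf{Monotonicity of VC dimension.} Let $S \subset X_R'$ be the set of size $\lfloor \alpha R \rfloor$ shattered by $\mathcal{H}_R(c_0)$, as constructed in the proof of Theorem~\ref{thm:vc_lower_proof}. For every $\sigma: S \to \{-1,1\}$ there is $f \in \mathcal{H}_R(c_0)$ with $f|_S = \sigma$. By the inclusion, the same $f$ lies in $\mathcal{H}_R(\eta_R)$. Hence $S$ is also shattered by $\mathcal{H}_R(\eta_R)$, and therefore
\[
\VC(\mathcal{H}_R(\eta_R)) \ge |S| \ge \VC(\mathcal{H}_R(c_0)) \ge cR,
\]
with the same constant $c = \alpha$.

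\textbf{Scope.} There is no real obstacle; the only care needed is to state the quantifiers correctly. The bound holds for all sufficiently large $R$, namely those for which Theorem~\ref{thm:vc_lower_proof} holds. It applies to any sequence $\eta_R$ with $\eta_R \le c_0$ at those $R$. In particular, by Theorem~\ref{thm:unconditional}, it covers the choice $\eta_R = \FR(\mu_R)$ (or a lower bound $\gg R^{-1/4-\epsilon}$ for it). That quantity tends to $0$, so it lies below $c_0$ for large $R$, and then $\mu_R \in \mathcal{H}_R(\eta_R)$.
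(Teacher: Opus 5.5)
Your argument is the paper's: the inclusion $\mathcal H_R(c_0)\subseteq\mathcal H_R(\eta_R)$ for $\eta_R\le c_0$, followed by monotonicity of VC dimension under enlarging the class (this is the remark after the proof of Theorem~\ref{thm:main}). Your Parseval check that $\FR(f)$ is well defined is fine. Two points need correcting.

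\textbf{The reversed inequality.} In the display $\VC(\mathcal H_R(\eta_R))\ge|S|\ge\VC(\mathcal H_R(c_0))\ge cR$, the middle inequality runs the wrong way. $S$ is one particular set shattered by $\mathcal H_R(c_0)$, so you only know $|S|\le\VC(\mathcal H_R(c_0))$. As written, you have shown $\VC(\mathcal H_R(\eta_R))\ge\lfloor\alpha R\rfloor$, but not the middle inequality $\VC(\mathcal H_R(\eta_R))\ge\VC(\mathcal H_R(c_0))$ that the corollary asserts.

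The repair is to apply your transfer step to an arbitrary set $T$ shattered by $\mathcal H_R(c_0)$. In particular, take $T$ of maximal cardinality $\VC(\mathcal H_R(c_0))$, which exists because $X_R'$ is finite. Then quote the statement of Theorem~\ref{thm:vc_lower_proof} for the last inequality, rather than reopening its proof. If you do reopen the proof, note that $\lfloor\alpha R\rfloor\ge\alpha R$ is false; you only get $\lfloor\alpha R\rfloor\ge\alpha R/2$ for $R\ge 2/\alpha$. The paper makes the same slip.

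\textbf{The scope paragraph.} Theorem~\ref{thm:unconditional} does not license the choice $\eta_R=\FR(\mu_R)$. It is only a lower bound, and nothing in the paper shows $\FR(\mu_R)\to 0$, or even $\FR(\mu_R)\le c_0$. Table~\ref{tab:FR} suggests instead that $\FR(\mu_R)\approx\sqrt{2/\pi}\approx 0.80$, while the construction in the proof of Theorem~\ref{thm:vc_lower_proof} only yields $c_0<1/(2\sqrt2)$. The admissible choice is the lower bound itself, $\eta_R=c_1R^{-1/4-\epsilon}$, as in the proof of Theorem~\ref{thm:main}; it does tend to $0$ and still gives $\mu_R\in\mathcal H_R(\eta_R)$. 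This point is peripheral to the corollary, which is stated for arbitrary $\eta_R\le c_0$, but the claim as you wrote it is unsupported.
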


\section{Proof of the Main Result}
\label{sec:learning}

We now combine the results of the previous sections to prove Theorem \ref{thm:main}. 
We recall the classical VC lower bound from learning theory.

\begin{theorem}[Vapnik--Chervonenkis lower bound \cite{VC1971, Ehrenfeucht1989}]
Let $\mathcal{H}$ be a hypothesis class of functions from a domain $X$ to $\{-1,1\}$ with Vapnik--Chervonenkis dimension $d \ge 1$. 
Then for any learning algorithm that receives $m$ labeled samples $(x_i, h^*(x_i))$ drawn independently from any distribution on $X$ and outputs a hypothesis $\hat{h}$, if the algorithm satisfies
\[
\sup_{h^* \in \mathcal{H}} \mathbb{P}\left( \operatorname{error}(\hat{h}) \ge \frac{1}{4} \right) \le \frac{1}{3}
\]
then necessarily $m \ge \frac{d}{32}$. 
Here $\operatorname{error}(h) = \mathbb{P}_{x \sim \mathcal{D}}(h(x) \neq h^*(x))$ for the true target function $h^*$.
\end{theorem}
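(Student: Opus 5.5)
The statement just above is the classical Vapnik--Chervonenkis lower bound, and the plan is the standard shattered-set argument in the style of Ehrenfeucht--Haussler--Kearns--Valiant. Let $S=\{x_1,\dots,x_d\}\subset X$ be a set shattered by $\mathcal H$. We take $\mathcal D$ to be the uniform distribution on $S$. We restrict attention to targets $h^*$ drawn so that the labels $h^*|_S$ form a uniformly random sign pattern; shattering guarantees each of the $2^d$ patterns is realized by some $h^*\in\mathcal H$. The argument then averages over this random target. Since the supremum over $h^*$ dominates the average, it suffices to show that when $m<d/32$ the averaged failure probability exceeds $1/3$. For this it is enough to show that under this prior $\mathbb P(\operatorname{error}(\hat h)\ge 1/4)>1/3$.

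The key steps, in order, are as follows.

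\begin{enumerate}
\item Condition on the sample $x_1',\dots,x_m'$. Since $m<d/32$, at most $m$ distinct points of $S$ are seen. So the set $U\subset S$ of unseen points has $|U|\ge d-m\ge \tfrac{31}{32}d$.
\item Conditional on the sample and its labels, the values $h^*(x)$ for $x\in U$ are still i.i.d.\ uniform signs, independent of $\hat h$ (including any internal randomness of the algorithm). Hence each $x\in U$ is misclassified independently with probability exactly $1/2$. The number $Z$ of errors on $U$ is therefore $\operatorname{Bin}(|U|,1/2)$, and $\operatorname{error}(\hat h)\ge Z/d$.
\item It remains to show $\mathbb P(Z\ge d/4)>1/3$. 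We have $\mathbb E Z=|U|/2\ge \tfrac{31}{64}d$, well above $d/4$. A Chebyshev bound gives
\[
\mathbb P(Z<d/4)\le \frac{|U|/4}{(|U|/2-d/4)^2},
\]
which is $O(1/d)$. So it is below $2/3$ once $d$ exceeds a small absolute constant.
\end{enumerate}

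The main obstacle is the small-$d$ regime. When $d<32$, the bound $m\ge d/32$ only asserts $m\ge 1$, i.e.\ that $m=0$ must fail. With no samples, $U=S$, and $\mathbb P(Z\ge d/4)$ is a fixed binomial tail. This tail is at least $1/2$ for every $d\ge1$: for $d=1$ the bound $Z\ge 1/4$ means $Z=1$, which has probability $1/2$, and for larger $d$ the median of $Z$ exceeds $d/4$. Thus $m=0$ fails, which gives $m\ge1\ge d/32$.

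For large $d$, I would either check the Chebyshev numerics explicitly or use the Paley--Zygmund inequality. If the constants become tight, I would replace the threshold by a cruder one, since $1/32$ leaves ample slack. The only subtle point to state carefully is that the conditional independence in step 2 uses that the algorithm's output depends only on the sample, its labels, and independent internal randomness.
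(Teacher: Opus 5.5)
The paper does not prove this statement at all. It is quoted from \cite{VC1971, Ehrenfeucht1989} and used as a black box in the proof of Theorem~\ref{thm:main}. Your argument is a correct, self-contained proof of exactly the version stated.

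It is the usual random-target argument over a shattered set, simplified by the fact that the accuracy parameter is the fixed constant $1/4$. This lets you take $\mathcal D$ uniform on all of $S$. The Ehrenfeucht--Haussler--Kearns--Valiant construction instead concentrates most of the mass on one point of $S$ and spreads mass of order $\epsilon$ over the rest, in order to obtain $1/\epsilon$ scaling. What the citation buys is that $1/\epsilon$ dependence, together with a $\log(1/\delta)$ term. What your version buys is coverage of the parameters actually used here, including $d=1$. As usually stated, the EHKV theorem assumes $\epsilon\le 1/8$ and a small confidence parameter, so it does not literally give the paper's $\epsilon=1/4$, $\delta=1/3$ formulation.

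The numerics you left open do close. Your Chebyshev bound equals $|U|/(|U|-d/2)^2$, and $u\mapsto u/(u-d/2)^2$ is decreasing for $u>d/2$. So with $|U|\ge \tfrac{31}{32}d$ the bound is at most $992/(225d)$. This is below $2/3$ for $d\ge 7$ and below $0.14$ for $d\ge 32$. The latter covers every case with $m\ge 1$, since $1\le m<d/32$ requires $d>32$. For $d\le 32$ the hypothesis $m<d/32$ forces $m=0$. Your binomial-tail observation handles that case, giving failure probability at least $1/2>1/3$. So the two regimes fit together with no gap.
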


\begin{proof}[Proof of Theorem \ref{thm:main}]
From Theorem \ref{thm:unconditional}, we have $\FR(\mu_R) \gg R^{-1/4-\epsilon}$. 
Choose a constant $c_1 > 0$ such that $\FR(\mu_R) \ge c_1 R^{-1/4-\epsilon}$ for all sufficiently large $R$. 
Let $\eta_R = c_1 R^{-1/4-\epsilon}$. 
For large $R$, $\eta_R \le c_0$, where $c_0$ is the constant from Theorem \ref{thm:vc_lower}. 
Hence $\mu_R \in \mathcal{H}_R(\eta_R)$.

By Corollary \ref{cor:vc_any_eta}, $\VC(\mathcal{H}_R(\eta_R)) \ge c R$ for some absolute constant $c > 0$. 
Therefore, by the VC lower bound theorem, any learning algorithm that succeeds uniformly on $\mathcal{H}_R(\eta_R)$ (and hence on $\mu_R$ as a member of this class) must use at least
\[
m \ge \frac{\VC(\mathcal{H}_R(\eta_R))}{32} \ge \frac{c}{32} R
\]
samples. 
Thus $m = \Omega(R)$, completing the proof.
\end{proof}

\begin{remark}
Although the lower bound $\FR(\mu_R)\gg R^{-1/4-\epsilon}$ is weaker than a constant lower bound, this does not affect the sample complexity conclusion. Indeed, for sufficiently large $R$, we have $\eta_R \le c_0$, and therefore
\[
\mathcal H_R(c_0) \subset \mathcal H_R(\eta_R).
\]
Since $\mathcal H_R(c_0)$ has Vapnik--Chervonenkis dimension at least $cR$, it follows that $\mathcal H_R(\eta_R)$ also has Vapnik--Chervonenkis dimension at least $cR$, because enlarging a hypothesis class cannot decrease its VC dimension. Consequently, the same lower bound $\Omega(R)$ on the number of samples holds.
\end{remark}

\section{Numerical Experiments}
\label{sec:numerics}

We supplement the theoretical results with numerical experiments for the Fourier Ratio
\[
\mathcal{R}(R)=\frac{\|F_R\|_{L^1([0,1])}}{\|F_R\|_{L^2([0,1])}},
\qquad
F_R(x)=\sum_{n=1}^R \mu(n)e^{2\pi i n x}.
\]

The purpose of this section is not to prove new results, but to illustrate the behavior of the Fourier Ratio for the M\"obius function and to compare it with other arithmetic functions. 
These computations are not used in the proofs. 
Their role is instead to provide intuition and to test whether the theoretical lower bounds are sharp.

Computations were done using SageMath.

\subsection{Benchmarks}

A natural reference value for $\mathcal{R}(R)$ comes from the following observation. 
If $Z$ is a centered real Gaussian random variable with variance $\sigma^2$, then
\[
\frac{\mathbb{E}|Z|}{(\mathbb{E}|Z|^2)^{1/2}} = \sqrt{\frac{2}{\pi}}.
\]
Since $\mathcal{R}(R) = \mathbb{E}_x |F_R(x)| / (\mathbb{E}_x |F_R(x)|^2)^{1/2}$ when $x$ is chosen uniformly from $[0,1]$, the constant $\sqrt{2/\pi} \approx 0.7979$ is the value one would obtain if the distribution of the values of $F_R(x)$ were Gaussian. 

We use this as a benchmark in the numerical plots. This behavior is consistent with rigorous results for random multiplicative functions: for Rademacher random multiplicative functions, exponential sums converge in distribution to Gaussian random variables, and in particular the Fourier Ratio remains bounded and converges to $\sqrt{2/\pi}$ (see \cite{RBN2020}).

\subsection{Results for the M\"obius function}

Table~\ref{tab:FR} records the main quantities for $R$ ranging from $10^2$ to $10^7$.

\begin{table}[ht]
\centering
\caption{The Fourier Ratio of the M\"obius function. Here $Q(R)/R$ is the squarefree density, and the reference values for the normalized $L^2$ and $L^1$ norms are $\sqrt{6/\pi^2}\approx 0.7797$ and $\sqrt{12/\pi^3}\approx 0.6220$, respectively.}
\label{tab:FR}
\begin{tabular}{r c c c c c}
\toprule
$R$ & $Q(R)/R$ & $\|F_R\|_{L^2}/\sqrt{R}$ & $\|F_R\|_{L^1}/\sqrt{R}$ & $\mathcal{R}(R)$ & $\|F_R\|_{L^\infty}/\sqrt{R}$ \\
\midrule
$10^2$             & 0.6100 & 0.7810 & 0.6160 & 0.7887 & 3.37 \\
$3 \times 10^2$    & 0.6100 & 0.7810 & 0.5890 & 0.7542 & 4.12 \\
$10^3$             & 0.6080 & 0.7797 & 0.6075 & 0.7790 & 4.09 \\
$3 \times 10^3$    & 0.6080 & 0.7797 & 0.6140 & 0.7875 & 4.56 \\
$10^4$             & 0.6083 & 0.7799 & 0.6202 & 0.7953 & 4.52 \\
$3 \times 10^4$    & 0.6081 & 0.7798 & 0.6228 & 0.7986 & 4.03 \\
$10^5$             & 0.6079 & 0.7797 & 0.6231 & 0.7992 & 3.89 \\
$3 \times 10^5$    & 0.6079 & 0.7797 & 0.6230 & 0.7991 & 3.87 \\
$10^6$             & 0.6079 & 0.7797 & 0.6226 & 0.7983 & 3.97 \\
$3 \times 10^6$    & 0.6079 & 0.7797 & 0.6225 & 0.7982 & 4.24 \\
$10^7$             & 0.6079 & 0.7797 & 0.6222 & 0.7980 & 4.61 \\
\bottomrule
\end{tabular}
\end{table}

Several features are visible in the table.

\smallskip\noindent\textbf{The $L^2$ normalization.}
The ratio $\|F_R\|_{L^2}/\sqrt{R}$ agrees very closely with the reference value $\sqrt{6/\pi^2} \approx 0.7797$. 
This is exactly what one expects from the classical squarefree asymptotic $Q(R)\sim \frac{6}{\pi^2}R$.

\smallskip\noindent\textbf{The $L^1$ normalization.}
The quantity $\|F_R\|_{L^1}/\sqrt{R}$ is numerically close to $\sqrt{12/\pi^3} \approx 0.6220$. 
At $R=10^7$, the observed value is $0.6222$.

\smallskip\noindent\textbf{The Fourier Ratio.}
The ratio $\mathcal{R}(R)$ approaches $\sqrt{2/\pi}$ rapidly after the smallest values of $R$. 
Over the range $10^4 \le R \le 10^7$, the computed values remain close to the Gaussian benchmark. 
This behavior is shown in Figure~\ref{fig:FR}.

\begin{figure}[ht]
\centering
\includegraphics[width=0.75\textwidth]{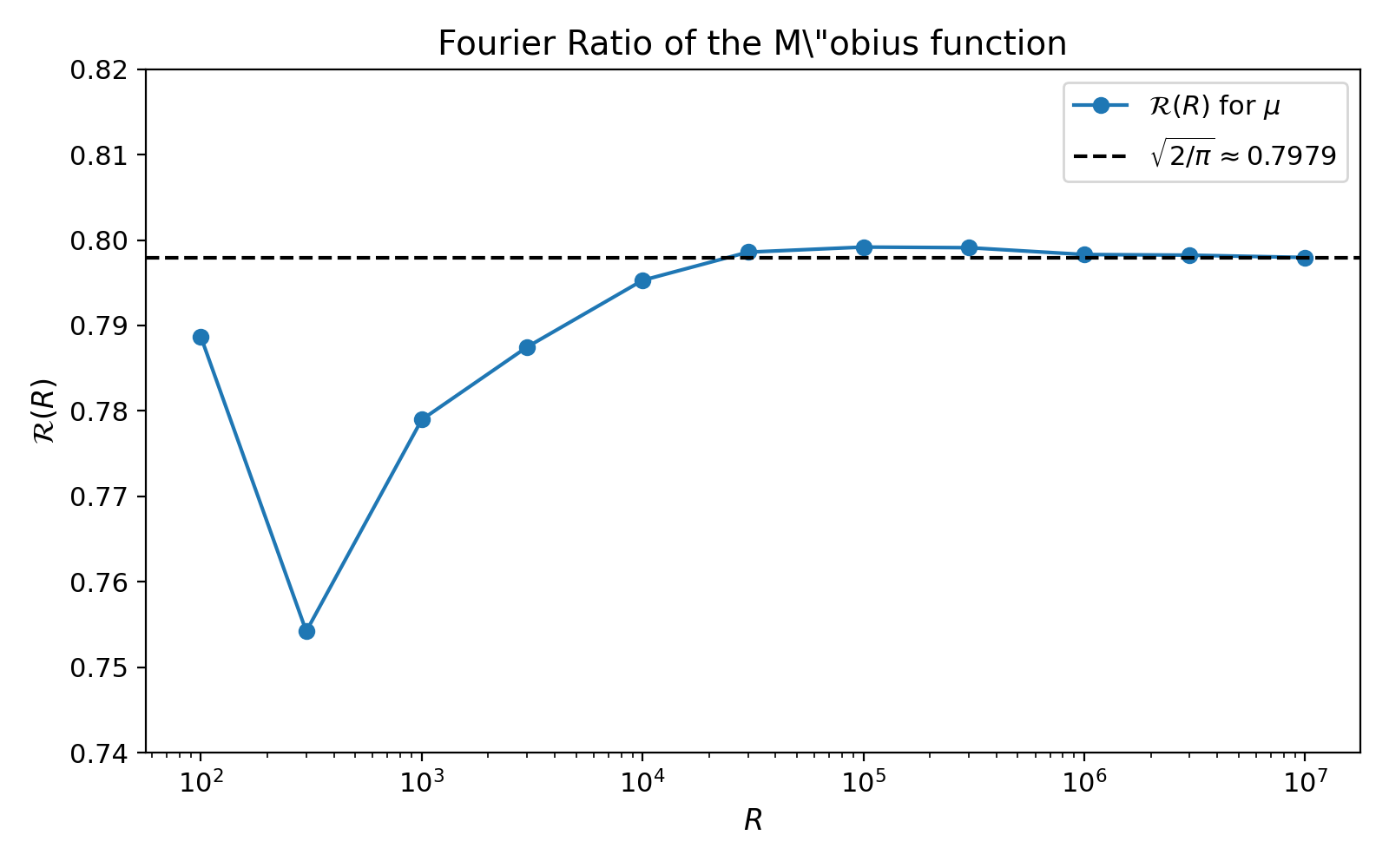}
\caption{The Fourier Ratio $\mathcal R(R)$ for the M\"obius function as a function of $R$ on a logarithmic scale. The dashed line marks the benchmark $\sqrt{2/\pi}\approx 0.7979$.}
\label{fig:FR}
\end{figure}

Figure~\ref{fig:convergence} plots the deviation $|\mathcal R(R)-\sqrt{2/\pi}|$ on a log--log scale.

\begin{figure}[ht]
\centering
\includegraphics[width=0.75\textwidth]{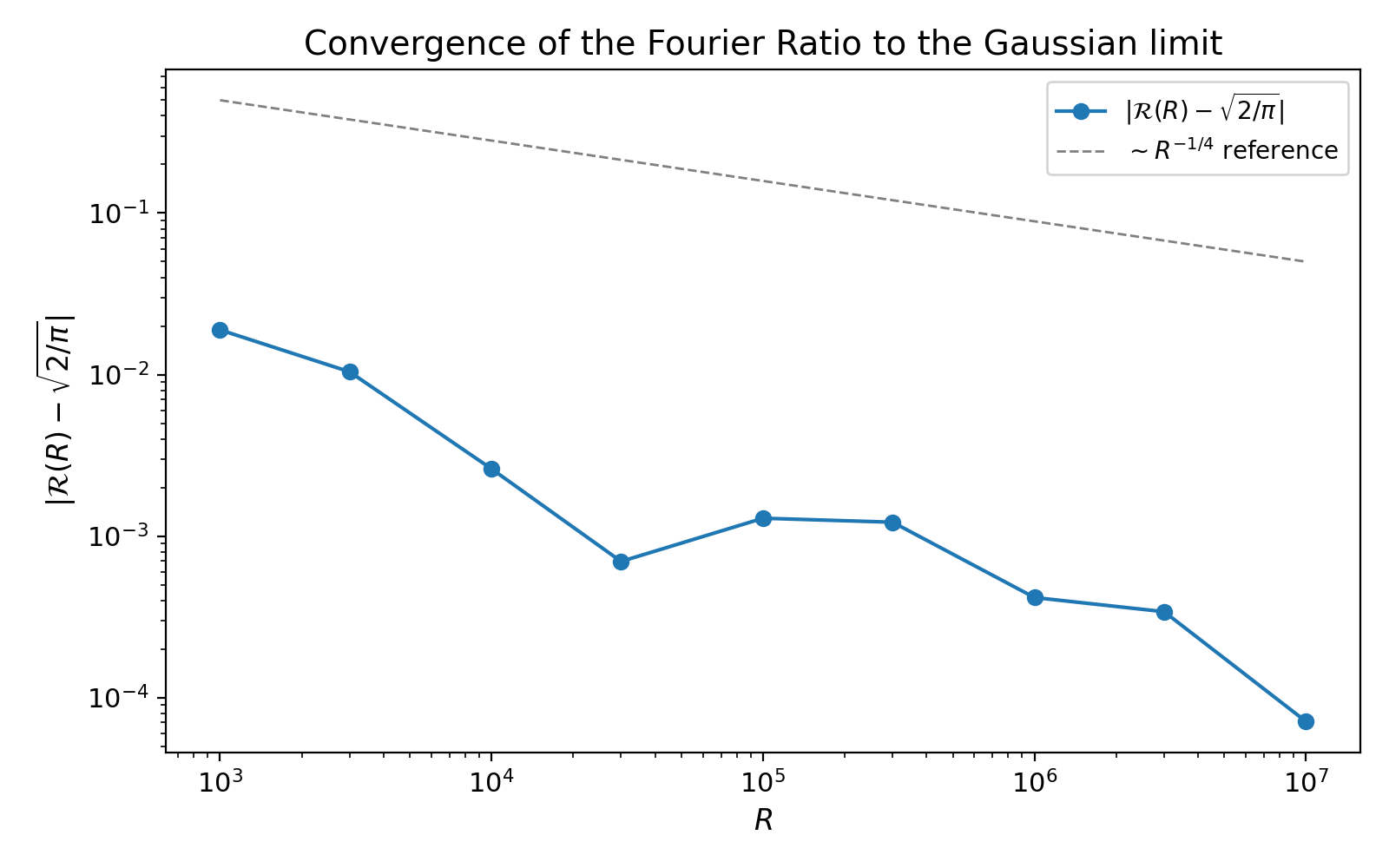}
\caption{The deviation $|\mathcal R(R)-\sqrt{2/\pi}|$ on a log--log scale for $R\ge 10^3$. The dashed line is a reference slope included only as a visual guide.}
\label{fig:convergence}
\end{figure}

\smallskip\noindent\textbf{Caveats.}
The numerical experiments are limited to $R \le 10^7$, which is very small by analytic number theory standards. 
Phenomena such as the Littlewood oscillation for $\pi(x)$ (where $\pi(x) - \operatorname{li}(x)$ changes sign infinitely often, with the first sign change occurring beyond $10^{14}$) serve as a cautionary tale: apparent convergence at small scales can be misleading. 
Thus the numerical evidence should be interpreted as suggestive rather than conclusive.

\subsection{Comparison with other arithmetic functions}

We compare the M\"obius function with several related examples at $R=10^6$. 
For functions with nonzero average, we consider centered versions to reflect oscillation rather than overall bias.

For each real-valued function $f$ on $\{1,\dots,R\}$, we form the exponential sum
\[
F_f(x)=\sum_{n=1}^R f(n)e^{2\pi i n x}, \qquad \mathcal{R}(f)=\frac{\|F_f\|_{L^1}}{\|F_f\|_{L^2}}.
\]
Table~\ref{tab:comparison} records the results.

\begin{table}[ht]
\centering
\caption{The Fourier Ratio for several arithmetic functions at $R=10^6$. The reference value for a centered real Gaussian is $\sqrt{2/\pi}\approx 0.7979$.}
\label{tab:comparison}
\begin{tabular}{l c c}
\toprule
Function $f(n)$ & $\mathcal{R}(f)$ & $\|F_f\|_{L^2}/\sqrt{R}$ \\
\midrule
$\mu(n)$ & 0.7983 & 0.7797 \\
$\lambda(n)$ & 0.7981 & 1.0000 \\
independent random signs $\varepsilon_n\in\{\pm1\}$ & 0.7981 & 1.0000 \\
$\Lambda(n)-c_{\Lambda}(R)$ & 0.6045 & 3.435 \\
$|\mu(n)|-c_{\mathrm{sf}}(R)$ & 0.1582 & 0.488 \\
\bottomrule
\end{tabular}
\end{table}

The M\"obius function, the Liouville function, and a sequence of independent random signs all yield Fourier Ratios very close to $\sqrt{2/\pi}$. 
By contrast, the centered von Mangoldt function and the centered squarefree indicator lie substantially below that level. 
Thus the Fourier Ratio separates these examples clearly at the scale tested. 
Figure~\ref{fig:comparison} illustrates this comparison.

\begin{figure}[ht]
\centering
\includegraphics[width=0.75\textwidth]{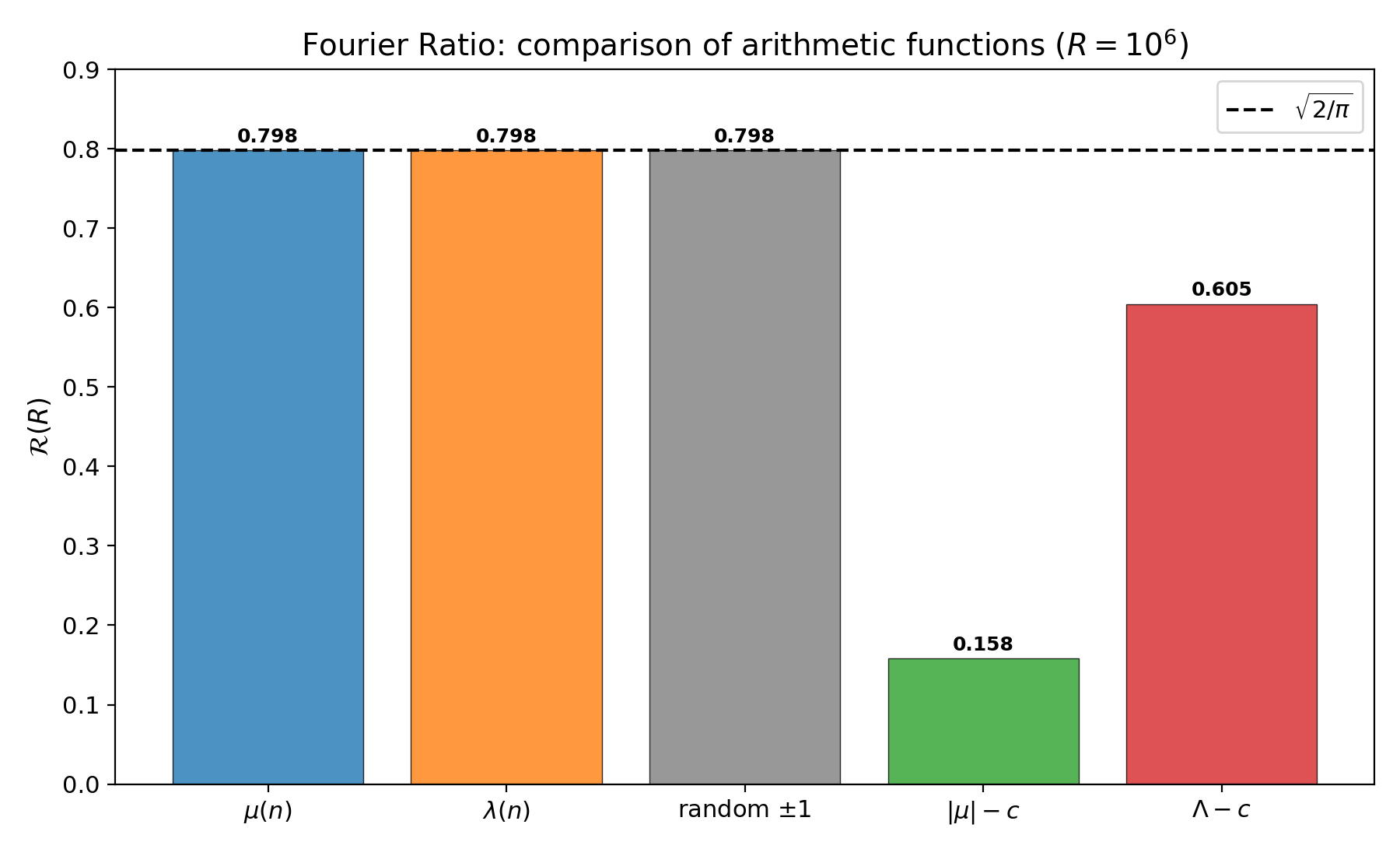}
\caption{The Fourier Ratio for five arithmetic functions at $R=10^6$. The dashed line marks the benchmark $\sqrt{2/\pi}$.}
\label{fig:comparison}
\end{figure}

\section{Open Problems}
\label{sec:problems}

The results of this paper raise several questions that we believe merit further investigation.

\begin{problem}[Improve the $L^1$ lower bound]
Pandey and Radziwi{\l}{\l} \cite{PandeyRadziwill} proved that $\|F_R\|_{L^1} \gg_\epsilon R^{1/4-\epsilon}$. 
Is the true order of growth $R^{1/2}$? 
This would be consistent with the Gaussian heuristic and with the conditional result under Assumption \ref{ass:l_infty}. 
Improving the exponent $1/4$ would require new insights into the distribution of zeros of Dirichlet $L$-functions.
\end{problem}

\begin{problem}[Determine the true growth of $\|F_R\|_{L^\infty}$]
The best known uniform bound, due to Baker and Harman \cite{BakerHarman1991}, is $\|F_R\|_{L^\infty} \ll_\epsilon R^{3/4+\epsilon}$. 
Is the true exponent $1/2$? 
This is a fundamental open problem in analytic number theory, closely related to the M\"obius randomness principle.
\end{problem}

\begin{problem}[Connection with the Riemann Hypothesis]
Assumption \ref{ass:l_infty} is much stronger than the Riemann Hypothesis. 
Is there any direct implication from the Riemann Hypothesis (or even the Generalized Riemann Hypothesis) to a non-trivial bound on $\|F_R\|_{L^\infty}$? 
Conversely, if the Riemann Hypothesis were false, would it force $\|F_R\|_{L^\infty}$ to be large? 
These questions remain open.
\end{problem}

\begin{problem}[VC dimension of $\mathcal{H}_R(\eta)$]
We have shown that for constant $\eta = c_0 > 0$, $\VC(\mathcal{H}_R(c_0)) \ge cR$. 
What is the VC dimension of $\mathcal{H}_R(\eta)$ for $\eta = \eta_R$ decaying to zero? 
Our argument shows that $\VC(\mathcal{H}_R(\eta_R)) \ge \VC(\mathcal{H}_R(c_0)) \ge cR$ whenever $\eta_R \le c_0$, but this does not preclude the possibility that the VC dimension is actually much larger for smaller $\eta_R$. 
Determining the precise dependence of VC dimension on the threshold $\eta$ is an interesting combinatorial problem.
\end{problem}

\begin{problem}[Other arithmetic functions]
The methods of this paper apply to any arithmetic function $f$ satisfying $\sum_{n=1}^R |f(n)|^2 \asymp R$ and the Pandey--Radziwill $L^1$ lower bound $\|F_f\|_{L^1} \gg R^{1/4-\epsilon}$. 
Which multiplicative functions satisfy such an $L^1$ lower bound? 
The results of \cite{PandeyRadziwill} suggest that a wide class of non-pretentious multiplicative functions may satisfy such bounds. 
Exploring the learnability of other arithmetic functions is a promising direction for future work.
\end{problem}

\begin{problem}[Algorithmic simplicity versus statistical complexity]
The M\"obius function is algorithmically simple: given $R$, one can compute $\mu(n)$ for all $n \le R$ by factoring each integer, so the Kolmogorov complexity of the sequence $(\mu(n))_{n\le R}$ is $O(\log R)$. 
Yet our results show that $\mu_R$ belongs to hypothesis classes that are statistically hard to learn in the distribution-independent setting, requiring $\Omega(R)$ samples for uniform success. This separation between algorithmic and statistical complexity is reminiscent of phenomena in computational learning theory (e.g., the existence of concept classes that are easy to represent but hard to learn). 
Can one construct explicit families of functions with maximal separation between Kolmogorov complexity and VC dimension? 
The M\"obius function provides a natural candidate from number theory.
\end{problem}

\end{document}